\theoremstyle{theorem}
\newtheorem{theorem}{Theorem}
\newtheorem{lemma}[theorem]{Lemma}
\newtheorem{corollary}[theorem]{Corollary}
\newcommand\R{\mathbb{R}}
\newcommand\Sym{\mathfrak{S}}
\newcommand\dist{\mathrm{dist}}
\begin{document}

\addtolength\textheight{1cm}

\begin{talk}{Florian Frick}
{Counterexamples to the topological Tverberg conjecture}
{Frick, Florian}

\smallskip

The ``topological Tverberg conjecture'' states that for given integers $r \ge 2$, $d \ge 1$, $N = (r-1)(d+1)$, and for any continuous map $f \colon \Delta_N \to \R^d$ from the $N$-simplex $\Delta_N$ into $\R^d$ there are $r$ pairwise disjoint faces $\sigma_1, \dots, \sigma_r$ of~$\Delta_N$ such that $f(\sigma_1) \cap \dots \cap f(\sigma_r) \neq \emptyset$.
This holds if $f$ is an affine map: this is a reformulation of Tverberg's original theorem \cite{tverberg:generalisation_radon}.
The conjecture for continuous $f$ was introduced, and proven for $r$ a prime, 
by B\'ar\'any, Shlosman and Sz\H{u}cs \cite{barany_schlosman_szucs:toplogical_tverberg}, and later extended to the case when $r$ is a prime power by \"Ozaydin \cite{oezaydin:equivariant}.
The conjecture is trivial for $d=1$. All other cases have remained open.
According to Matou\v{s}ek \cite[p.~154]{matousek:borsuk-ulam}, the validity of the conjecture for general $r$ is  one of the most challenging problems in topological combinatorics.

Here we prove the existence of counterexamples to the topological Tverberg conjecture for any $r$ that is not a power of a prime and dimensions $d \ge 3r+1$. Our construction builds on recent work of 
Mabillard and Wagner \cite{Mabillard:2014:ETP:2582112.2582134}, from which we first obtain counterexamples to $r$-fold versions of the van Kampen--Flores theorem. Counterexamples to the topological Tverberg conjecture are then obtained by an additional application of the constraint method of Blagojevi\'c, Ziegler and the author \cite{blagojevic2014tverberg}.

In the conference proceedings version \cite{Mabillard:2014:ETP:2582112.2582134} Mabillard and Wagner announced the generalized van Kampen theorem together with an extended sketch of its proof; a full version of the paper is forthcoming. To state the generalized van Kampen theorem, we first need to fix some notation. We refer to Matou\v{s}ek \cite{matousek:borsuk-ulam} for further explanations. For a simplicial complex $K$ denote by 
$$
K^{\times r}_{\Delta(2)} = \{(x_1,\dots,x_r) \in \sigma_1 \times \dots \times \sigma_r \: | \: \sigma_i \text{ face of } K, \sigma_i \cap \sigma_j = \emptyset \ \forall i \neq j\}
$$
the \emph{$2$-wise deleted product} of $K$ and by $K^{(d)}$ the \emph{$d$-skeleton} of $K$. The space 
$K^{\times r}_{\Delta(2)}$ is a polytopal cell complex in a natural way (its faces are products of simplices). Denote by $W_r$ the vector space $\{(x_1, \dots, x_r) \in \R^r \: | \: \sum x_i =0\}$ with the action by the symmetric group $\Sym_r$ that permutes coordinates.

\begin{theorem}[Mabillard \& Wagner {\cite[Theorem 3]{Mabillard:2014:ETP:2582112.2582134}}]\label{vankampen}
	Suppose that $r\ge 2$, $k \ge 3$, and let $K$ be a simplicial complex of dimension $(r-1)k$. Then the following statements are equivalent:
	\begin{compactenum}[\rm(i)]
		\item There exists an $\Sym_r$-equivariant map $K^{\times r}_{\Delta(2)} \to S(W_r^{\oplus rk})$.
		\item There exists a continuous map $f \colon K \to \R^{rk}$ such that for any $r$ pairwise disjoint faces $\sigma_1, \dots, \sigma_r$ of~$K$ we have $f(\sigma_1) \cap \dots \cap f(\sigma_r) = \emptyset$.
	\end{compactenum}
\end{theorem}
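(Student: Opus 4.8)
The plan is to prove (ii) $\Rightarrow$ (i) directly, with no hypothesis on $r$ or $k$, and to prove (i) $\Rightarrow$ (ii) in two stages: an obstruction-theoretic reduction to the vanishing of an $r$-fold van Kampen obstruction, followed by a geometric removal of $r$-fold points via finger moves and an $r$-fold Whitney trick. For (ii) $\Rightarrow$ (i): given $f$ as in (ii), define $\bar F \colon K^{\times r}_{\Delta(2)} \to W_r^{\oplus rk}$ by letting $\bar F(x_1,\dots,x_r)$ be the orthogonal projection of $(f(x_1),\dots,f(x_r)) \in (\R^{rk})^{\oplus r}$ onto $W_r^{\oplus rk}$, i.e. $\bar F(x_1,\dots,x_r) = (f(x_i) - \frac{1}{r}\sum_j f(x_j))_{i=1}^{r}$. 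If $\bar F(x_1,\dots,x_r) = 0$ then $f(x_1) = \dots = f(x_r)$ with the $x_i$ lying in pairwise disjoint faces, contradicting (ii); hence $\bar F$ is nowhere zero and $\bar F/\|\bar F\| \colon K^{\times r}_{\Delta(2)} \to S(W_r^{\oplus rk})$ is the required map, which is $\Sym_r$-equivariant because permuting the $x_i$ permutes the coordinates of $\bar F$.

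For (i) $\Rightarrow$ (ii), start from a map $f_0 \colon K \to \R^{rk}$ in general position, for instance the affine extension of a generic placement of the vertices of $K$. A codimension count shows that the images $f_0(\sigma_1),\dots,f_0(\sigma_r)$ of pairwise disjoint faces can meet only if $\sum_i \dim \sigma_i \ge (r-1)\cdot rk$, whereas $\sum_i \dim\sigma_i \le r\cdot(r-1)k = (r-1)\cdot rk$ always; so when they meet, equality holds, every $\sigma_i$ is a top-dimensional face of $K$, and $f_0(\sigma_1)\cap\dots\cap f_0(\sigma_r)$ is a finite set of transverse $r$-fold points in the relative interiors of the images. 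We may assume $K$ has $r$ pairwise disjoint top-dimensional faces — otherwise both statements hold trivially — so that $n := \dim K^{\times r}_{\Delta(2)} = r(r-1)k$. Fixing orientations and summing the local intersection signs of the $r$-fold points over each unordered $r$-tuple $T = \{\sigma_1,\dots,\sigma_r\}$ of pairwise disjoint top-dimensional faces produces integers $c(T)$ that assemble into an equivariant $n$-cochain on $K^{\times r}_{\Delta(2)}$ with coefficients in $\pi_{n-1}(S(W_r^{\oplus rk})) \cong \Z$ carrying its natural twisted $\Sym_r$-action — the ($r$-fold) van Kampen cochain of $f_0$. Since $S(W_r^{\oplus rk})$ is $(n-2)$-connected and the $\Sym_r$-action on $K^{\times r}_{\Delta(2)}$ is free, obstruction theory together with the classical comparison of the geometric and equivariant-cohomological obstructions (the ``deleted product criterion'', now needed in its $r$-fold form) shows that statement (i) holds if and only if the class of the van Kampen cochain in $H^n_{\Sym_r}(K^{\times r}_{\Delta(2)};\Z)$ vanishes, i.e. if and only if $(c(T))_T = \delta g$ for some equivariant $(n-1)$-cochain $g$.

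So suppose $(c(T))_T = \delta g$. A \emph{finger move} — pushing a sheet $f_0(\sigma_i)$ through the codimension-one faces appearing in the support of $g$ — changes the counts $c(T)$ by precisely $\pm \delta g$, so after finitely many finger moves one obtains a general-position map $f_1$ with $c(T) = 0$ for every $r$-tuple $T$ of pairwise disjoint top-dimensional faces. For each such $T$ the $r$-fold points of $f_1$ then cancel in pairs of opposite sign, and the \emph{$r$-fold Whitney trick} removes each pair by an ambient isotopy supported near a suitable higher-multiplicity Whitney disk joining the two points. Here the hypothesis $k \ge 3$ is used: each top-dimensional face has codimension $k$ in $\R^{rk}$, so a dimension count puts the Whitney disks in general position — they are embedded, and their interiors together with the isotopies they generate can be kept off all the other sheets — whence the cancellations for distinct tuples do not interfere. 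Iterating over all $T$ yields a map $f \colon K \to \R^{rk}$ with no $r$-fold coincidence on pairwise disjoint top-dimensional faces, and hence (again by the codimension count) none on pairwise disjoint faces of any dimensions; this is (ii). I expect the entire difficulty to sit in the $r$-fold Whitney trick: for $r = 2$ it is the classical Whitney trick, valid in codimension $\ge 3$, but for general $r$ one must first identify the right configuration along which $r$ sheets can be separated simultaneously, and then push through a considerably more delicate general-position argument, with $k \ge 3$ being exactly what makes it work. The obstruction-theoretic part, by contrast, is a notationally heavy but essentially routine transcription of the van Kampen--Haefliger--Weber machinery into the $r$-fold equivariant setting.
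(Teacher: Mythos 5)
Two remarks before the assessment proper. First, the paper you were given does not prove this theorem at all: it is imported as a black box from Mabillard and Wagner \cite{Mabillard:2014:ETP:2582112.2582134}, whose proof (announced there, written out in the later 46-page preprint \cite{Mabillard2015}) is precisely the $r$-fold Whitney trick machinery you sketch. So your proposal must be judged as a proof of the cited result, not compared against an in-paper argument. Second, your direction (ii)\,$\Rightarrow$\,(i) is complete and correct: the projected Gauss-type map $\bar F$ is well defined on $K^{\times r}_{\Delta(2)}$, vanishes exactly at $r$-fold coincidences on pairwise disjoint faces, and is $\Sym_r$-equivariant; note this direction uses neither $k\ge 3$ nor $\dim K=(r-1)k$.

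For (i)\,$\Rightarrow$\,(ii), however, what you have is a correct \emph{strategy outline} of the Mabillard--Wagner proof with the decisive content missing. The codimension count, the concentration of $r$-fold points on $r$-tuples of top-dimensional faces, and the fact that equivariant obstruction theory in this critical dimension ($\dim K^{\times r}_{\Delta(2)} = r(r-1)k$, target an $((r-1)rk-2)$-connected sphere with free $\Sym_r$-action) reduces (i) to the vanishing of a single class are all fine as stated, though the identification of that equivariant primary obstruction with your geometric intersection-sign cochain, and the finger-move lemma realizing an arbitrary coboundary $\delta g$ geometrically, are themselves results that need proof in the $r$-fold setting (routine but not free). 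The genuine gap is the step you explicitly defer: the $r$-fold Whitney trick, i.e.\ the cancellation of a pair of oppositely signed $r$-fold points of $r$ pairwise disjoint $(r-1)k$-simplices mapped to $\R^{rk}$ by a local modification that does not create new $r$-fold points and does not disturb the disjointness already achieved for other tuples. For $r=2$ this is the classical Whitney trick; for $r\ge 3$ it is not a formal adaptation --- one must build the cancellation inductively through intermediate multiplicities (piping/local disjunction arguments in which $k\ge 3$ is used repeatedly), and this construction is essentially the entirety of \cite{Mabillard2015}. Writing ``I expect the entire difficulty to sit in the $r$-fold Whitney trick'' correctly locates the gap but does not close it, so as it stands the proposal proves only the easy implication.
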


An important result on the existence of equivariant maps was shown by \"Ozaydin.

\begin{lemma}[\"Ozaydin {\cite[Lemma 4.1]{oezaydin:equivariant}}]\label{lem:equivariant}
	Let $d \ge 3$ and $G$ be a finite group. Let $X$ be a $d$-dimensional free $G$-CW complex and let $Y$ be a $(d-2)$-connected G-CW complex. There is a $G$-map $X \to Y$ if and only if there are $G_p$-maps $X \to Y$ for every Sylow $p$-subgroup $G_p$, $p$ prime.
\end{lemma}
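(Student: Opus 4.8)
The plan is to treat the non-trivial ``if'' direction by equivariant obstruction theory together with a transfer argument; the ``only if'' direction is immediate, since any $G$-map $X\to Y$ is in particular a $G_p$-map for every subgroup $G_p\le G$. The key observation is that under the stated hypotheses the existence of a $G$-map $X\to Y$ is governed by a single cohomology class, and a transfer argument forces this class to vanish as soon as it vanishes after restriction to each Sylow subgroup.

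First I would set up the relevant obstruction theory. Write $n=d-2$; since $d\ge 3$ we have $n\ge 1$, so $Y$ is simply connected and $M:=\pi_{d-1}(Y)$ is an abelian group carrying only the $G$-action inherited from the action on $Y$ (there is no $\pi_1(Y)$-twisting to track). As $X$ is a free $G$-CW complex, equivariant obstruction theory for $G$-maps $X\to Y$ runs exactly as in the classical case, computed by the cochain complex $\mathrm{Hom}_{\Z G}(C_*(X),-)$. Because $\pi_j(Y)=0$ for $j\le n=d-2$, there is no obstruction to extending a $G$-map over successive skeleta up through $X^{(d-1)}$, and any two $G$-maps $X^{(d-1)}\to Y$ are $G$-homotopic on $X^{(d-2)}$ (again because $\pi_j(Y)=0$ for $j\le d-2$). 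Since $\dim X=d$, the only remaining obstruction is the primary one: a well-defined class $\mathfrak{o}(G)\in H^d_G(X;M)$ that vanishes if and only if a $G$-map $X\to Y$ exists. Running the same argument over a subgroup $H\le G$ produces $\mathfrak{o}(H)\in H^d_H(X;M)$, and by naturality of obstruction cocycles the restriction map satisfies $\mathrm{res}^G_H\,\mathfrak{o}(G)=\mathfrak{o}(H)$.

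Next comes the transfer. For each subgroup $H\le G$ (of finite index, $G$ being finite) there is a transfer homomorphism $\mathrm{tr}^G_H\colon H^d_H(X;M)\to H^d_G(X;M)$ satisfying $\mathrm{tr}^G_H\circ\mathrm{res}^G_H=[G:H]\cdot\mathrm{id}$. Suppose a $G_p$-map $X\to Y$ exists for every Sylow $p$-subgroup. Then $\mathfrak{o}(G_p)=0$, so $\mathrm{res}^G_{G_p}\,\mathfrak{o}(G)=0$ and hence $[G:G_p]\cdot\mathfrak{o}(G)=\mathrm{tr}^G_{G_p}(0)=0$ for every prime $p\mid|G|$. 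The integers $[G:G_p]$ (over $p\mid|G|$) have no common prime factor, so $1=\sum_p a_p[G:G_p]$ for suitable $a_p\in\Z$, whence $\mathfrak{o}(G)=\sum_p a_p[G:G_p]\,\mathfrak{o}(G)=0$. Thus the primary obstruction vanishes and a $G$-map $X\to Y$ exists, as desired.

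The main obstacle is the bookkeeping in the obstruction-theory step: checking that the primary class $\mathfrak{o}(G)$ is genuinely well defined, independent of all choices, and natural under restriction to subgroups. This relies on exactly the two hypotheses at hand --- freeness of $X$, so that the equivariant obstruction theory is computed by $\mathrm{Hom}_{\Z G}(C_*(X),M)$ and behaves formally like the non-equivariant theory; and the $(d-2)$-connectivity of $Y$ together with $d\ge 3$, which makes $Y$ simply connected, annihilates $\pi_j(Y)$ for $j<d-1$, and guarantees that a $G$-map on the $(d-1)$-skeleton exists and is unique up to $G$-homotopy after restriction to the $(d-2)$-skeleton, so that precisely one obstruction class survives in top degree $d=\dim X$. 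Once this is in place, the transfer step is purely formal, and the only arithmetic input is the coprimality of the Sylow indices $[G:G_p]$.
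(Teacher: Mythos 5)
Your argument is correct and coincides with the proof the paper implicitly relies on: the paper only cites \"Ozaydin's Lemma 4.1 without reproving it, and \"Ozaydin's own argument is exactly the one you give --- for free $X$ of dimension $d$ and $(d-2)$-connected $Y$ the sole obstruction is a primary class in $H^d_G(X;\pi_{d-1}(Y))$, natural under restriction, and it is killed by the transfer relation $\mathrm{tr}^G_{G_p}\circ\mathrm{res}^G_{G_p}=[G:G_p]\cdot\mathrm{id}$ together with the coprimality of the Sylow indices. No gaps to report.
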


\"Ozaydin uses this result to prove the existence of $\Sym_r$-equivariant maps $$(\Delta_{(r-1)(d+1)})^{\times r}_{\Delta(2)} \to S(W_r^{\oplus d})$$ for $r$ not a prime power. An initial motivation for Mabillard and Wagner was to use such a map to construct counterexamples to the topological Tverberg conjecture via $r$-fold versions of the Whitney trick. 
However, for their approach to work they need codimension $k \ge 3$. Here we first derive counterexamples to $r$-fold versions of the van Kampen--Flores theorem, which is a Tverberg-type statement with a bound on the dimension of faces, see Corollary \ref{cor}, from the result of Mabillard and Wagner and \"Ozaydin's work, and eventually obtain counterexamples to the topological Tverberg conjecture by a combinatorial reduction.

\begin{corollary}\label{cor}
	Let $r \ge 6$ be an integer that is not a prime power and $k \ge 3$ an integer. Then for any $N$ there exists a continuous map $f\colon \Delta_N \to \R^{rk}$ such that for any $r$ pairwise disjoint faces $\sigma_1, \dots, \sigma_r$ of~$\Delta_N$ with $\dim \sigma_i \le (r-1)k$ we have $f(\sigma_1) \cap \dots \cap f(\sigma_r) = \emptyset$.
\end{corollary}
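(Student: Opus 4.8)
The strategy is to pass to the $(r-1)k$-skeleton of $\Delta_N$, use the Mabillard--Wagner equivalence (Theorem \ref{vankampen}) to reduce to the existence of an $\Sym_r$-equivariant map into a sphere, and then apply \"Ozaydin's Lemma \ref{lem:equivariant}; the hypothesis that $r$ is not a prime power will enter only through \"Ozaydin's elementary observation that every Sylow subgroup of $\Sym_r$ acts intransitively on $\{1,\dots,r\}$. First I would dispose of small $N$: if $N<(r-1)k$ the constraint $\dim\sigma_i\le(r-1)k$ is vacuous, and a suitable $f$ for $\Delta_N$ is obtained by restricting a map built for $\Delta_{(r-1)k}\supseteq\Delta_N$ to the face $\Delta_N$, since faces of $\Delta_N$ are faces of $\Delta_{(r-1)k}$. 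So assume $N\ge(r-1)k$ and set $K:=(\Delta_N)^{((r-1)k)}$, a simplicial complex of dimension exactly $(r-1)k$. As $k\ge 3$, Theorem \ref{vankampen} applies, so it suffices to produce an $\Sym_r$-equivariant map $K^{\times r}_{\Delta(2)}\to S(W_r^{\oplus rk})$: a continuous $g\colon K\to\R^{rk}$ as in part~(ii) extends to $f\colon\Delta_N\to\R^{rk}$ (possible since $\R^{rk}$ is contractible and $K$ is a subcomplex of the CW complex $\Delta_N$), and any $r$ pairwise disjoint faces $\sigma_1,\dots,\sigma_r$ of $\Delta_N$ with $\dim\sigma_i\le(r-1)k$ lie in $K$, whence $f(\sigma_i)=g(\sigma_i)$ and $f(\sigma_1)\cap\dots\cap f(\sigma_r)=\emptyset$.

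To build the equivariant map I would set $G=\Sym_r$, $X:=K^{\times r}_{\Delta(2)}$, $Y:=S(W_r^{\oplus rk})$ and verify the hypotheses of Lemma \ref{lem:equivariant}. The polytopal cell complex $X$ carries the free $G$-action permuting factors: a permutation $\pi$ fixing $(x_1,\dots,x_r)\in\sigma_1\times\dots\times\sigma_r$ would force $x_i\in\sigma_i\cap\sigma_{\pi^{-1}(i)}$ for all $i$, impossible unless $\pi=\mathrm{id}$ because the nonempty faces $\sigma_i$ are pairwise disjoint; thus $X$ is a free $G$-CW complex. Each cell $\sigma_1\times\dots\times\sigma_r$ has dimension $\sum_i\dim\sigma_i\le r(r-1)k$, and one can place a triangle, an edge and $r-2$ further vertices disjointly in $\Delta_N$ (as $N\ge(r-1)k\ge r+2$), so $3\le\dim X\le(r-1)rk$. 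Finally $Y$ is a sphere of dimension $(r-1)rk-1$, hence $((r-1)rk-2)$-connected, hence $(\dim X-2)$-connected. Lemma \ref{lem:equivariant} therefore reduces the task to exhibiting, for each prime $p$, a $G_p$-map $X\to Y$ for a Sylow $p$-subgroup $G_p\le\Sym_r$.

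This is the crux, and following \"Ozaydin it is immediate once $r$ is not a prime power. Fix a prime $p$. Since $r$ is not a power of $p$, the $p$-group $G_p$ has no orbit of size $r$ on $\{1,\dots,r\}$ (orbit sizes of a $p$-group are powers of $p$), so it preserves a partition $\{1,\dots,r\}=A\sqcup B$ with $A,B\neq\emptyset$; then $v:=|B|\,\mathbf{1}_A-|A|\,\mathbf{1}_B$ is a nonzero vector of $W_r$ fixed by $G_p$. Hence $W_r^{\oplus rk}|_{G_p}$ contains a trivial subrepresentation of dimension $rk\ge 1$, so $Y^{G_p}=S(W_r^{\oplus rk})^{G_p}\neq\emptyset$, and the constant map from $X$ to a point of $Y^{G_p}$ is the required $G_p$-map. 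By Lemma \ref{lem:equivariant} an $\Sym_r$-equivariant map $X\to Y$ exists, and together with Theorem \ref{vankampen}, the extension step, and the small-$N$ reduction this gives the corollary. I do not expect any serious difficulty beyond bookkeeping: confirming that $\dim K=(r-1)k$ exactly so that Theorem \ref{vankampen} applies, that $X$ is free with $3\le\dim X\le(r-1)rk$ so that Lemma \ref{lem:equivariant} applies, and the reduction for $N<(r-1)k$ --- the one conceptual ingredient being the intransitivity of Sylow subgroups of $\Sym_r$ when $r$ is not a prime power.
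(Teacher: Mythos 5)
Your proposal is correct and follows essentially the same route as the paper: restrict to the $((r-1)k)$-skeleton, invoke Theorem \ref{vankampen}, and obtain the required $\Sym_r$-equivariant map via Lemma \ref{lem:equivariant}, using that Sylow $p$-subgroups fix a point of $S(W_r^{\oplus rk})$ when $r$ is not a prime power so constant maps suffice. The extra details you supply (freeness of the deleted-product action, the explicit fixed vector $|B|\mathbf{1}_A-|A|\mathbf{1}_B$, the small-$N$ case) are exactly what the paper delegates to \"Ozaydin's Lemma~2.1 and to routine bookkeeping.
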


\begin{proof}
	Let $K = \Delta^{((r-1)k)}_N$ denote the $((r-1)k)$-dimensional skeleton of the simplex $\Delta_N$ on $N+1$ vertices. We only need to construct $f$ on $K$ and extend continuously to $\Delta_N$ in an arbitrary way. By Theorem \ref{vankampen} we need to show that there exists an $\Sym_r$-equivariant map $K^{\times r}_{\Delta(2)} \to S(W_r^{\oplus rk})$. The reasoning is the same as in \cite[Proof of Theorem 4.2]{oezaydin:equivariant}: the free $\Sym_r$-space $K^{\times r}_{\Delta(2)}$ has dimension at most $d=r(r-1)k$, and $S(W_r^{\oplus rk}) \cong S^{(r-1)rk-1}$ is $(d-2)$-connected. By Lemma \ref{lem:equivariant} the existence of an $\Sym_r$-map $K^{\times r}_{\Delta(2)} \to S(W_r^{\oplus rk})$ reduces to the existence of equivariant maps for Sylow $p$-subgroups, but $p$-groups have fixed points in $S(W_r^{\oplus rk})$ for $r$ not a prime power by \cite[Lemma 2.1]{oezaydin:equivariant}, so a constant map will do.
\end{proof}

The existence of the $\Sym_r$-equivariant map $K^{\times r}_{\Delta(2)} \to S(W_r^{\oplus rk})$ also follows immediately from \cite[Theorem 4.2]{oezaydin:equivariant} by observing that an $n$-dimensional, finite, free $\Sym_r$-complex always admits an equivariant map into an $(n-1)$-connected $\Sym_r$-space, see for example Matou\v{s}ek \cite[Lemma 6.2.2]{matousek:borsuk-ulam}.

Any $r$ generic affine subspaces of dimension $(r-1)k$ in $\R^{rk}$ intersect in a point by codimension reasons. 
Here we see that a continuous map $\Delta^{((r-1)k)}_N \to \R^{rk}$ can avoid this intersection, and indeed a map without
any such intersection exists for any $N$, but only if $r$ is not a prime power.
Volovikov \cite{volovikov:van-kampen_flores} proved that a map as postulated by 
Corollary \ref{cor} does not exist if $r$ is a prime power and $N \ge (r-1)(d+2)$ --- the case $r$ prime was proved by Sarkaria \cite{sarkaria1991}; see \cite{blagojevic2014tverberg} for more general results with significantly simplified proofs. 

The map $f$ in Corollary \ref{cor} could not be constructed if the topological Tverberg conjecture were true, since the validity of the topological Tverberg conjecture would imply such an intersection result for faces of bounded dimension by the constraint method. For the sake of completeness we will present a construction that does not rely on \cite{blagojevic2014tverberg}.

\begin{theorem}[The topological Tverberg conjecture fails]
	Let $r \ge 6$ be an integer that is not a prime power, and let $k \ge 3$ be an integer. Let $N =(r-1)(rk+2)$. Then there exists a continuous map $F \colon \Delta_N \to \R^{rk+1}$ such that for any $r$ pairwise disjoint faces $\sigma_1, \dots, \sigma_r$ of~$\Delta_N$ we have $F(\sigma_1) \cap \dots \cap F(\sigma_r) = \emptyset$.
\end{theorem}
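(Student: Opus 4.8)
The plan is to use the constraint method: start from the map $f\colon \Delta_N \to \R^{rk}$ furnished by Corollary \ref{cor} applied with this same $N = (r-1)(rk+2)$ (the hypotheses that $r \ge 6$ is not a prime power and $k \ge 3$ are exactly those needed there), append one extra real coordinate recording the distance to a suitable skeleton of $\Delta_N$, and show that any Tverberg partition of the enlarged map $F\colon \Delta_N \to \R^{rk+1}$ would produce a Tverberg partition into faces of dimension at most $(r-1)k$, which Corollary \ref{cor} forbids.

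Concretely, I would fix a metric on $\Delta_N$ and set $F = (f,g)\colon \Delta_N \to \R^{rk}\times\R = \R^{rk+1}$, where $g(x) = \dist(x, \Delta_N^{((r-1)k)})$ is the distance from $x$ to the $(r-1)k$-skeleton. This $g$ is continuous, and $g(x)=0$ holds precisely when the carrier of $x$ (the inclusion-minimal face containing $x$) has dimension at most $(r-1)k$. Suppose, for contradiction, that $\sigma_1,\dots,\sigma_r$ are pairwise disjoint faces and $x_i \in \sigma_i$ satisfy $F(x_1) = \dots = F(x_r)$. Passing from $\sigma_i$ to the carrier $\tau_i$ of $x_i$ keeps the faces pairwise disjoint (their vertex sets are disjoint), comparing last coordinates yields a common value $c := g(x_1) = \dots = g(x_r) \ge 0$, and comparing the first $rk$ coordinates yields $f(x_1) = \dots = f(x_r)$.

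The combinatorial core is the case distinction on $c$. If $c > 0$, then no $x_i$ lies in $\Delta_N^{((r-1)k)}$, so $\dim \tau_i \ge (r-1)k+1$, i.e.\ $\tau_i$ has at least $(r-1)k+2$ vertices, for every $i$; since the $\tau_i$ are pairwise disjoint this requires at least $r\bigl((r-1)k+2\bigr) = (r-1)rk + 2r$ vertices, whereas $\Delta_N$ has only $N+1 = (r-1)(rk+2)+1 = (r-1)rk + 2r - 1$ of them --- a contradiction. Hence $c = 0$, so each $\tau_i$ lies in $\Delta_N^{((r-1)k)}$, that is, $\dim \tau_i \le (r-1)k$. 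But then $f(x_1) = \dots = f(x_r)$ exhibits a point of $f(\tau_1) \cap \dots \cap f(\tau_r)$ for pairwise disjoint faces of dimension at most $(r-1)k$, contradicting Corollary \ref{cor}. Therefore no such $\sigma_1,\dots,\sigma_r$ exist.

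I do not expect a genuine obstacle here. The one point to get right is the choice of skeleton dimension, so that the pigeonhole step in the $c>0$ case is exactly tight for $N = (r-1)(rk+2)$; continuity of the distance function, disjointness of the carriers, and splitting the coincidence $F(x_1) = \dots = F(x_r)$ into simultaneous coincidences of $f$ and of $g$ are all routine. This is precisely the constraint method of Blagojevi\'c, Ziegler and the author \cite{blagojevic2014tverberg}, specialized to deduce a topological Tverberg statement in $\R^{rk+1}$ from the van Kampen--Flores-type statement of Corollary \ref{cor} in $\R^{rk}$.
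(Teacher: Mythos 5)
Your proof is correct and follows essentially the same route as the paper: the same map $F=(f,\dist(\cdot,\Delta_N^{((r-1)k)}))$, the same passage to carriers (the paper's ``relative interior'' reduction), and the same tight vertex count $r\bigl((r-1)k+2\bigr)>N+1$ forcing the common distance to vanish and yielding a contradiction with Corollary \ref{cor}.
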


\begin{proof}
	Let $f\colon \Delta_N \to \R^{rk}$ be a continuous map as constructed in Corollary \ref{cor}, that is, such that for any $r$ pairwise disjoint faces $\sigma_1, \dots, \sigma_r$ of~$\Delta_N$ with $\dim \sigma_i \le (r-1)k$ we have $f(\sigma_1) \cap \dots \cap f(\sigma_r) = \emptyset$.  Define $F\colon \Delta_N \to \R^{rk+1}$, $x \mapsto (f(x),\dist(x,\Delta_N^{((r-1)k)}))$. Suppose there were $r$ pairwise disjoint faces $\sigma_1, \dots, \sigma_r$ of~$\Delta_N$ such that there are points $x_i \in \sigma_i$ with $F(x_1) = \dots = F(x_r)$. By restricting to subfaces if necessary we can assume that $x_i$ is in the relative interior of $\sigma_i$. Then all the $x_i$ have the same distance to the $(r-1)k$-skeleton of $\Delta_N$. 
	
	Suppose all $\sigma_i$ had dimension at least $(r-1)k+1$. Then these faces would involve at least $r((r-1)k+2) = (r-1)(rk+2)+2>N+1$ vertices. Thus, one face $\sigma_j$ has dimension at most $(r-1)k$ and $\dist(x_j,\Delta_N^{((r-1)k)})= 0$. But then we have $\dist(x_i,\Delta_N^{((r-1)k)})= 0$ for all $i$, so $x_i\in\Delta_N^{((r-1)k)}$ and thus $\sigma_i\subseteq\Delta_N^{((r-1)k)}$ for all~$i$. This contradicts our assumption on $f$.
\end{proof}

If the topological Tverberg conjecture holds for $r$ pairwise disjoint faces and dimension $d+1$, then it also holds for dimension $d$ and the same number of faces. Thus, we are only interested in low-dimensional counterexamples. If $r$ is not a prime power then the topological Tverberg conjecture fails for dimensions $3r+1$ and above. Hence, the smallest counterexample this construction yields is a continuous map $\Delta_{100} \to \R^{19}$ such that any six pairwise disjoint faces have images that do not intersect in a common point.

\noindent
\emph{Update November 2015.} This and further applications of these methods can now be found in~\cite{blagojevic2015tverberg}. 
The full version of Mabillard and Wagner's extended abstract~\cite{Mabillard:2014:ETP:2582112.2582134} along with further constructions and a lower-dimensional counterexample is now available~\cite{Mabillard2015}.

\noindent
\emph{Acknowledgements.} I am grateful to Pavle Blagojevi\'c and G\"unter M. Ziegler for many insightful discussions. 
I would like to thank John M. Sullivan and Uli Wagner for improving the exposition of this manuscript with several good comments and suggestions.
Research supported by DFG via the Berlin Mathematical School.

\end{talk}

\end{document}